\newtheorem{theorem}{Theorem}
\newtheorem{corollary}[theorem]{Corollary}
\newtheorem{proposition}[theorem]{Proposition}
\theoremstyle{definition}
\newtheorem{remark}[theorem]{Remark} %Rich 9/14
\begin{document}

\begin{center}
\texttt{A version of this article was submmitted to {\it Statistics and Probability Letters} March 5, 2015.} \\[.5em]
{\TeX}ed \today 
\vspace{2em}
\end{center}

\title%
{The Random Division of the Unit Interval\\
and the Approximate -1 Exponent\\
in the Monkey-at-the-Typewriter Model of Zipf's Law}

\author{ Richard Perline}
\address{Flexible Logic Analytics, 34-50 80th Street, Jackson Heights, NY  11372}
\email{richperline@gmail.com}
\keywords{Zipf's law, monkey model, sample spacings, random division of the unit interval, power law exponent}
\maketitle

%% following from:   https://www.sharelatex.com/learn/Headers_and_footers
\pagestyle{fancy}
\fancyhf{}
\rhead{Richard Perline}
\lhead{The Random Division of the Unit Interval ...}
\cfoot{Page \thepage}

\begin{abstract}
We show that the exponent in the inverse power law of word frequencies for the monkey-at-the-typewriter model of Zipf's law will tend towards $-1$
under broad conditions
as the alphabet size increases to infinity and the letter probabilities are specified as the values from a random division of the unit interval. This is proved utilizing a strong limit theorem for log-spacings due to Shao and Hahn.  
\end{abstract}

\section{Introduction}

By the monkey-at-the-typewriter model we mean the random scheme  generating words defined as a sequence of letters  terminating with a space character.
Using the simple case of $K\ge 2$ {\it equal} letter probabilities plus one space character and an independence assumption, G. A. Miller (1957;  Miller and Chomsky, 1963) highlighted a somewhat hidden aspect of Mandelbrot's (1953, 1954a, 1954b) work on Zipf's law by 
showing how this scheme generates an inverse power law for word frequencies mimicking Zipf's law for natural languages.  Miller also 
observed empirically that the exponent of the power law  in his model was close to $-1$ for his numeric example with $K=26$ letters. 
An exponent value near  $-1$ is especially interesting because it is an iconic feature of empirical word frequency data for most Western languages,  as Zipf  (1935, 1949)
amply demonstrated.   In other words, not only does Miller's simple model generate an inverse power law, but by letting
the alphabet size $K$ be sufficiently large, it also approximates  the same parameter value commonly seen with real word frequency data. 

The power law behavior of the monkey model with {\it unequal} letter probabilities is substantially more complicated to analyze.
Utilizing tools from analytic number theory, Conrad and Mitzenmacher (2004) have provided the first  fully rigorous analysis of the monkey model power law in this general case.  
They did not comment about Miller's remark concerning a power law exponent close to $-1$.
  Our main objective in this paper is to analyze the behavior of the exponent.  We do this by
specifying the letter probabilities as the spacings from a random division of the unit interval and then make use of a strong limit theorem 
for log-spacings due to Shao and Hahn (1995).  This theorem will let us show that the approximate $-1$ exponent is  an almost universal
parameter that results from the asymptotics of log-spacings as $K\to\infty$ and where the spacings are generated from a broad class of distributions - for example, any distribution with a bounded density function on the unit interval will satisfy their theorem.  Our idea for studying spacings in connection with 
the approximate $-1$ exponent was first given in Perline (1996, section 3), but  the argument there should now be viewed as just a small step in the direction we follow here.

In Section 2 we demonstrate the power law behavior of the monkey model with arbitrary letter probabilities and
obtain a formula for the exponent $-\beta$.  
The derivation of this in Conrad and Mitzenmacher is somewhat intricate,    
but they also provide an instructive Fibonacci example  (Mitzenmacher, 2004, gives the same example)   that motivated us down
a simpler path.
Our application of 
Csisz\'ar's (1969) approach below requires only elementary methods and also brings into focus 
Shannon's (1948) ingenious difference equation logic as he used it originally to define the capacity of a noiseless communication channel.
This combinatorial scheme employs precisely the same line of thinking as in Conrad and Mitzenmacher's  Fibonacci example  and as Mandelbrot used in his work on Zipf's law.
Bochkarev and Lerner (2012) have also provided an elementary analysis of the power law behavior using the Pascal pyramid.

Section 3 contains our main result showing conditions such that $-\beta\approx -1$ for the monkey model  using the Shao and Hahn (1995) asymptotics for log-spacings. Throughout this article, all logarithms use the natural base $e$ unless the radix is explicitly indicated.

\section{The Monkey Model and Its Power Law Exponent $-\beta$}

\subsection{Defining the Model}

Consider a keyboard with an alphabet of $K\ge 2$ letters $\{L_1\dots,L_K\}$ and space character $S$.  Assume that the letter characters are 
labelled such that their 
corresponding non-zero probabilities are rank ordered as  $q_1\ge q_2, \dots\ge  q_K$. The space character has probability $s$, so that $\sum_{i=1}^K q_i + s =1$.
A word is defined as any sequence of non-space letters terminating with a space.  
A word $W$ of exactly $n$ letters is a string such as  $W=L_{i_1}L_{i_2}\dots L_{i_n}S$ and    has a probability of the form $P(W)=P=q_{i_1}q_{i_2}\dots q_{i_n}s$ because letters are struck independently.  The space character with no preceding letter character will be considered a word of length zero.

Write  the rank ordered sequence of descending word probabilities  in the ensemble of all possible words of finite length as
$P_1=s >  P_2\ge  \dots P_r  \ge \dots$ ($P_1=s$ is always the first and largest word probability.)
   Break ties for words with
equal probabilities by alphabetical ordering, so that each word probability has a unique rank $r$. 
In Miller's (1957) version of the monkey model, all letter probabilities have the same value, $q_1=\dots =q_K=(1-s)/K$ and so words of the same length
have the same probability.

It will be convenient to
work with the word {\it base values}, $B=P/s$, which are  simply the product of the letter probabilities without the space probability.   
Write $B_r$ for  $P_r/s$. 
Figure 1(a) shows a log-log plot of $\log B_r$ vs. $\log r$ for the particular example used by Miller (1957).  In his example the space character was given the value
$s=.18$ (similar to what is seen for English text) and so the  $K=26$ letters each are given  the identical probability $(1-.18)/26\approx .0315$.  Figure 1(a) plots all  base values  for words of 4 or fewer letters, i.e., 
$\sum_{j=0}^4 26^j = {26^5 -1 \over 25}=475,255$ values.  The horizontal lines show the steps in  a power law for Miller's model.  These steps arise
from the fact that 
 there will be $26^j$ words of length $j$ letters, all with the  same base value $B=(.82/26)^j$, but with $26^j$ different (though consecutive) ranks. 
The other log-log plots in Figure 1 are based on using {\it unequal} letter probabilities chosen as the spacings of a uniform distribution (Figure 1(b))  and from the
beta(3,2) distribution  (Figure 1(c)) with density $f(x)=x^{3-1}(1-x)^{2-1}/B(3,2)$.   The reason
we see an approximate $-1$ slope for the graph in 1(a) will  be explained in Section 2.2 and the explanation for the graphs of Figure 1(b) and Figure 1(c) will be given in Section 3.

\subsection{The Special Case of Equal Letter Probabilities}

\begin{figure}
\centering
\includegraphics[width=6.5in]{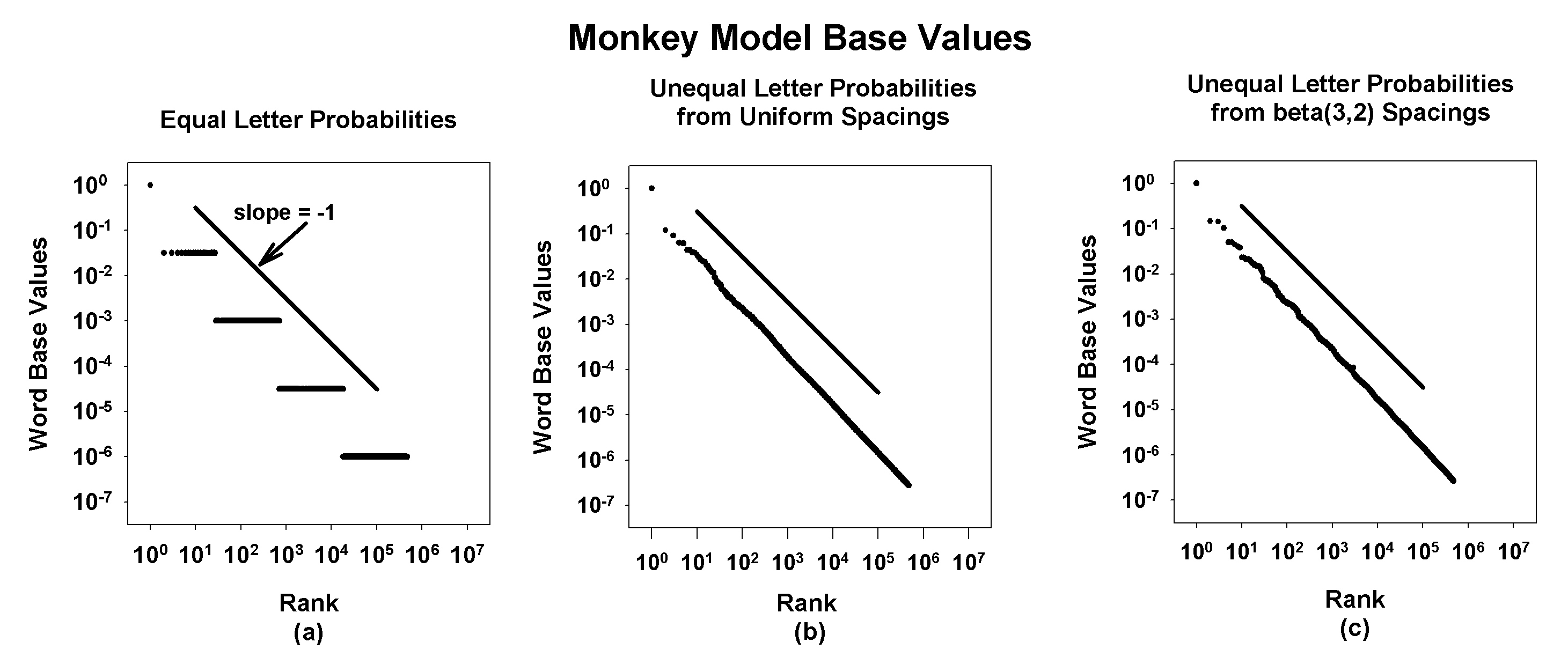}
\caption{Monkey-at-the-typewriter word base values where the $K=26$ letter probabilities have been specified as: (a) all equal; (b) the spacings from a uniform distribution; (c) the spacings
from a beta(3,2) distribution.  Note the approximate $-1$ slopes in all three cases. }
\end{figure}

The  log-log linear appearance of the equal letter probability model in  Figure 1(a) is easily explained, as Miller (1957) showed, and
he also drew attention to the approximate $-1$ log-log slope.  
We will put our discussion  in the framework of Conrad and Mitzenmacher's (2004)  
analysis of the monkey model where they 
carefully defined power law behavior for  monkey word probabilities  as the situation where the inequality 
\begin{equation}
A_1 r^{-\beta} \le P_r \le A_2 r^{-\beta}, \nonumber
\end{equation} 
holds for sufficiently large rank $r$ and  constants
$0<A_1 \le  A_2$, $\beta>0$. 
It will also prove convenient to state this inequality in an equivalent form where the rank $r$ is bounded
in terms of $P_r$:
\begin{equation}
A_1^{\prime} P_r^{-1/\beta}=A_1^{-1/\beta} P_r^{-1/\beta}\le  r \le A_2 ^{-1/\beta} P_r^{-1/\beta}=A_2^{\prime} P_r^{-1/\beta}, \nonumber
\end{equation} 
or in the form that we  use here in terms of  base values $B_r=P_r/s$,  
\begin{equation}
C_1  B_r^{-1/\beta}\le  r \le C_2  B_r^{-1/\beta}.  
\end{equation}

In the equal letter probability model of Miller (1957) and Miller and Chomsky (1963), the base value $B_r=q_1^j$ for some $j$.  
Then there are $K^j$ words with base values equal to $B_r$  and  $\sum_{i=0}^{j-1} K^i $ words with base values strictly smaller than $B_r$.
Therefore  the rank $r$ of any base value equal to $B_r$ satisfies the inequality
\begin{equation}
{1\over K} K^j<\sum_{i=0}^{j-1} K^i  + 1 \le  r \le \sum_{i=0}^j K^i <{K\over K-1} K^j.
\end{equation}
Now
\begin{equation}
 K^j=\Bigl(q_1^{\log_{q_1} K}\Bigr)^j =(q_1^j)^{\log_{q_1} K}=B_r ^{\log_{q_1} K}, \nonumber
\end{equation}
therefore, the outer bounds of the inequality in (2) can be written as
\begin{equation}
{1\over K}B_r ^{\log_{q_1} K}  <  r <{K\over K-1} B_r ^{\log_{q_1} K}. \nonumber
\end{equation}
This is in the form of the inequality (1) with constants $C_1={1 /  K}$, $C_2={K ( K-1)^{-1}}$, and $-{1 /  \beta}=\log_{q_1} K$, 
demonstrating the power law behavior of  Miller's model. 

It is worth pointing out that we can adopt a very simple view of things by working in log-log scales.
Inequality (1) implies the weaker asymptotic conditions  $\log r \sim -\log B_r/\beta$
and $\log B_r \sim -\beta \log r$
 as $r\to \infty$.   We'll refer to this as  {\it log-log linear} behavior and note that with this view,
$\log C_1$ and $\log C_2$ are asymptotically negligible so that only the parameter $\beta$ becomes important.
With Miller's model, a plot of $\log B_r$ by $\log r$ produces a step-function approximation to a line with
slope $-\beta=1 /  \log_{q_1} K$.  

Miller then made the additional  observation that with the parameters used to create Figure 1(a),  his model gives $-\beta\approx -1.06$, close to $-1$.  He did not go further than this, but it is easy
to see that
\begin{eqnarray}
 -\beta&=&{1 \over \log_{q_1} K}  \nonumber   \\  
&=&{\log q_1 \over \log K} \nonumber  \\
&=&{1\over \log K} \log \Bigl({1-s\over K}\Bigr) \nonumber    \\
&=&{ \log(1-s)\over \log K} -1 \to -1, \nonumber
\end{eqnarray}
as $K\to \infty$.  Consequently, for sufficiently large $K$,  the exponent in his model will be close to  $-1$ and so plots of $\log B_r$ vs. $\log r$ will exhibit
log-log linear behavior with a slope in the vicinity of this value.  

\subsection{The General Case of Unequal Letter Probabilities}

We begin by representing the $K$ letter probabilities
as powers of the maximum letter probability value $q_1$ so that 
$q_1=q_1^{\alpha_1} \ge  q_2=q_1^{\alpha_2} \cdots\ge q_K=q_1^{\alpha_K}$,
where $1=\alpha_1 \le \alpha_2 \cdots\le \alpha_K$ are arbitrary real numbers.   
Then for every base value for a word of  length $n \ge 1$,   $B=q_1 ^{\alpha_{i_1}+\cdots +\alpha_{i_n}}$.
The largest base value for the null word of length 0 consisting of the space character alone has $B_1=1=q_1^0$. 
Applying a radix-$q_1$ logarithm to any base value gives a sum $\log_{q_1} B=\alpha_{i_1}+\cdots+\alpha_{i_n}$
for any word of $n\ge 1$ letters and 0 for the null word. 

Now we introduce Csisz\'ar's formulation of Shannon's recursive counting logic.
 For any real $t\ge 0$, define $N(t)$ as the number of monkey words that have radix-$q_1$ log base values, $\log_{q_1} B$,  in the
half open interval $(t-\alpha_1, t]$.  
By construction, $\alpha_1=1$, so this interval is 
 $(t - 1, t]$. 
Any word of  of $n\ge 1$ letters must begin with one of the letters $L_1, L_2,\dots, L_K$.
Among the $N(t)$ words with log base values in $(t - 1, t]$, the number
of words beginning with $L_i$ is 
$N(t - \log_{q_1} q_i)=N(t- \alpha_i)$.  Since this is true for $i=1,\dots,K$,   
the recursion
\begin{equation}
N(t)=\sum_{i=1}^K N(t -\alpha_i)\ \ {\rm for}\ \ t \ge \alpha_1= 1, 
\end{equation} 
holds with the additional  conditions $N(t)=0$ if $t<0$ and $N(t)=1$ if $0\le t <1$.
Csisz\'ar (his  Proposition 1.1)  then proves:
\begin{theorem}[]\label{CsiszarTheorem}
There is a positive constant $0<b< 1$, such that $N(t)$ has the bounds
\begin{equation}
bR_0^{t}< N(t) \le R_0^{t} \ \ {\rm for}\  t\ge 0, 
\end{equation}
where $R_0>1$ is the unique positive root  of the equation 
\begin{eqnarray}
\sum_{i=1}^K X^{-\alpha_i}=1.
\end{eqnarray}
\end{theorem}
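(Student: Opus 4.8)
The plan is to treat $R_0^t$ as the natural comparison solution of the recursion and to bracket $N(t)$ by induction, handling the two inequalities separately. First I would settle the existence, uniqueness, and size of $R_0$. Setting $g(X)=\sum_{i=1}^K X^{-\alpha_i}$ for $X>0$, each summand is continuous and strictly decreasing since $\alpha_i\ge 1>0$, so $g$ is continuous and strictly decreasing with $g(X)\to\infty$ as $X\to 0^+$ and $g(X)\to 0$ as $X\to\infty$. Hence $g(X)=1$ has a unique positive root $R_0$, and because $g(1)=\sum_{i=1}^K 1=K\ge 2>1$ while $g$ decreases, that root satisfies $R_0>1$. The single identity I will use repeatedly is the defining relation $\sum_{i=1}^K R_0^{-\alpha_i}=1$, which is exactly what makes $R_0^t$ reproduce itself under the recursion, since $\sum_{i=1}^K R_0^{t-\alpha_i}=R_0^t$.

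For the upper bound $N(t)\le R_0^t$ I would induct on intervals of unit length, exploiting that $\alpha_i\ge\alpha_1=1$ forces every recursion argument to satisfy $t-\alpha_i\le t-1$. On $[0,1)$ we have $N(t)=1\le R_0^t$ because $R_0>1$ and $t\ge 0$. Assuming the bound on $[0,\tau]$ for some $\tau\ge 0$, then for $t\in(\tau,\tau+1]$ each argument $t-\alpha_i\le t-1\le\tau$ is already covered when nonnegative and contributes $N(t-\alpha_i)=0\le R_0^{t-\alpha_i}$ when negative; summing the recursion gives $N(t)\le\sum_{i=1}^K R_0^{t-\alpha_i}=R_0^t$, extending the bound to $[0,\tau+1]$ and hence to all $t\ge 0$.

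The lower bound $N(t)>b R_0^t$ is where the real work lies, and the main obstacle is the transition region $1\le t<\alpha_K$, where some arguments $t-\alpha_i$ drop below $0$ and return $N=0$, so the naive inductive step collapses. My remedy is twofold. I would first show $N(t)\ge 1$ for every $t\ge 0$: this holds by definition on $[0,1)$, and for $t\ge 1$ the recursion gives $N(t)\ge N(t-\alpha_1)=N(t-1)\ge 1$ by induction (equivalently, the integer $\lfloor t\rfloor$ is an achievable log base value, realized by the word $L_1^{\lfloor t\rfloor}$, and lies in $(t-1,t]$). I would then fix any $b$ with $0<b<R_0^{-\alpha_K}$, which automatically gives $b<1$ since $R_0>1$. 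On the compact initial segment $[0,\alpha_K]$ the lower bound is immediate from $b R_0^t\le b R_0^{\alpha_K}<1\le N(t)$, sidestepping the boundary difficulty entirely. For $t>\alpha_K$ every argument obeys $0<t-\alpha_K\le t-\alpha_i\le t-1<t$, so all referenced values are nonnegative and lie strictly below $t$; running the unit-interval induction outward from the base segment $[0,\alpha_K]$ then yields $N(t)>b\sum_{i=1}^K R_0^{t-\alpha_i}=b R_0^t$. Combining the two bounds gives the claimed bracketing with $0<b<1$.
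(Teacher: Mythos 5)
Your proof is correct, and it follows the same overall strategy as the paper's (Csisz\'ar's) argument: establish $R_0>1$ by monotonicity of $\sum_i X^{-\alpha_i}$, use the defining identity $\sum_i R_0^{-\alpha_i}=1$ to make $R_0^t$ self-reproducing under the recursion, and propagate both bounds by induction over unit-length intervals. The upper bound is essentially identical in both treatments. Where you genuinely diverge is in producing the lower-bound constant. The paper defines a recursive non-increasing sequence $b_1=R_0^{-1}$, $b_{i+1}=b_i\sum_{\alpha_j\le i}R_0^{-\alpha_j}$, and carries $b_i$ through the induction; the transition region $1\le t<\alpha_K$ (where some arguments of the recursion fall below $0$) is absorbed by letting the constant shrink at each step until the partial sum saturates to $1$ and the sequence stabilizes at its limit $b$. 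You instead prove the auxiliary fact $N(t)\ge 1$ for all $t\ge 0$ (via the word $L_1^{\lfloor t\rfloor}$, or by the recursion through the $\alpha_1$ term), fix any $b<R_0^{-\alpha_K}$ up front so that the bound is trivially true on the whole compact segment $[0,\alpha_K]$, and only then start the induction, at which point every recursion argument is nonnegative and the step is clean. Your version buys an explicit closed-form threshold for $b$ and a more transparent treatment of the boundary difficulty at the cost of one extra lemma; the paper's version avoids that lemma and yields a concrete (generally different) value of $b$ as the stabilized limit of the $b_i$. One small bookkeeping point: in your upper-bound step the recursion is only available for $t\ge 1$, so for $t\in(0,1)$ you must fall back on the definition $N(t)=1\le R_0^t$ rather than ``summing the recursion''; since that is exactly your stated base case on $[0,1)$, nothing is actually missing.
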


\begin{proof}
Csisz\'ar first 
 establishes that (5) has a unique positive root $R_0>1$ and then gives this induction argument
to confirm the bounds in (4).  
 Recursively define a sequence of positive numbers $b_i$ by: 
\begin{equation}
b_{i+1}=b_i \sum_{{\alpha_j}\le i} R_0^{-\alpha_j}, \ \  i=1, 2, \dots \ {\rm and}\  b_1=R_0^{-\alpha_1}=R_0^{-1}. \nonumber
\end{equation}
The inequality 
\begin{equation}
b_i R_0^{t} < N(t)\le R_0^{t} \ \  {\rm if} \ 0\le t <i
\end{equation}  holds for $i=1$.  Now  assume it also holds for  the integers $1,2,\dots,i$.
Then for $i\le t  < i+1$,
\begin{equation}
b_{i+1}R_0^{t}=\big(b_i\sum_{\alpha_j\le i}R^{-\alpha_j}\big)R_0^t  \le \sum_{{\alpha_j}\le t} b_i R_0^{t -\alpha_j} < \sum_{j=1}^K N(t -\alpha_j)
 \le \sum_{j=1}^K R_0^{t -{\alpha_j}}=R_0^{t}, \nonumber
\end{equation} 
where we use the facts that $t-\alpha_j<i$ and  that $\sum_{j=1}^K R_0^{-\alpha_j}=1$.
Since $\sum_{j=1}^K N(t -\alpha_j)=N(t)$ by (3), we have shown that
if the inequality (6) holds for $i$, it holds for $i+1$, as well.
Lastly,  the  $b_i$ are non-increasing and since $b_1=1/R_0<1$,  they  converge after a finite number of steps to  some $0<b < 1$,
completing the induction proof.  
\end{proof}

We can now find bounds on the rank $r$ of a base value $B_r$ by  
calculating a cumulative sum involving $N(\log_{q_1} B_r)$.   This is the basic idea Mandelbrot used in the context of his information-theoretic models,
where instead of  $\log_{q_1} B_r$ he has a cost value $C_r$
(Brillouin (1956) provides a helpful discussion.)
Let $n=\lfloor \log_{q_1} B_r\rfloor \ge 0$ be the greatest integer contained in $\log_{q_1} B_r$.
 Then the number of words with radix-$q_1$  log base values $\le \log_{q_1}  B_r$ (equivalently, having base values
$\ge B_r$)
  is given by 
\begin{equation}
N_{\rm cum} (\log_{q_1} B_r)= N(\log_{q_1} B_r) + N(\log_{q_1} B_r -1) + \cdots + N(\log_{q_1} B_r -n). \nonumber 
\end{equation}
This means that the rank $r$ of any base value $\ge B_r$ satisfies $r\le N_{\rm cum} (\log_{q_1} B_r)$, so  
by Theorem 1, 
\begin{eqnarray}
r&\le& N_{\rm cum} (\log_{q_1} B_r)  \nonumber  \\ 
&\le&  R_0^{\log_{q_1} B_r} + R_0^{\log_{q_1}B_r-1}+\cdots + R_0^{\log_{q_1} B_r-n}  \nonumber \\
&<& \Bigl({R_0\over R_0-1}\Bigr)R_0^{\log_{q_1} B_r}. 
\end{eqnarray}
On the other hand, since $B_r < B_r /q_1$, we must have $r> N_{\rm cum}(\log_{q_1} B_r-1)$ and so Theorem 1 provides
a lower bound on $r$:
\begin{eqnarray}
r&>& N_{\rm cum}(\log_{q_1} B_r-1)\nonumber   \\
&>& N(\log_{q_1} B_r-1)\nonumber \\
&> &b R_0^{\log_{q_1} B_r-1}\nonumber \\
&=& {b\over R_0} R_0^{\log_{q_1} B_r},
\end{eqnarray}
for some $0<b< 1$.  Combining (7) and (8), 
\begin{eqnarray}
C_1  B_r^{\log_{q_1} R_0}={b\over R_0} R_0^{\log_{q_1} B_r}<r< \Bigl({R_0\over R_0-1}\Bigr)R_0^{\log_{q_1} B_r}=C_2 B_r^{\log_{q_1} R_0}, \nonumber 
\end{eqnarray}
which demonstrates power law behavior by the Conrad-Mitzenmacher inequality criterion in (1) with $-1/\beta=\log_{q_1} R_0$ .  
 Hence a plot of $\log B_r$ versus $\log r$ will 
produce an asymptotically log-log linear graph with slope $-\beta=1/\log_{q_1} R_0$.  Note that $-\beta$ can be written
in several different ways:  $-\beta=1/\log_{q_1} R_0=\log {q_1}/\log R_0= \log_{R_0} q_1$.
As Bochkarev and Lerner (2012) point out, $\beta$  is also the solution to the equation
$q_1^{1/\beta}+q_2^{1/\beta}+\dots + q_K^{1/\beta}=1$.  

In Miller's case with $q_1=\cdots=q_k=(1-s)/K$, we obtain $R_0=K$ as the solution to
$\sum_{i=1}^K X^{-1}=KX^{-1}=1$,  giving $-\beta=1 / \log_{q_1} K$,  as seen above.   
Let us also record here that $-\beta={1/\log_{q_1} R_0}<-1$.  This is true
because $q_1 + q_1^{\alpha_2} +\cdots + q_1^{\alpha_K}=1-s<1 = R_0^{-1} + R_0^{-\alpha_2}+\cdots+R_0^{-\alpha_K}$, and it follows that
$q_1<R_0^{-1}$ and that  ${1 /  \log_{q_1} R_0}={\log  q_1 / \log R_0 }< -1$.

We turn now to examine conditions where
$-\beta$ approaches $-1$ for the general case of unequal letter probabilities. 

\section{$-\beta \approx -1$ with Large $K$ from the Asymptotics of Log-Spacings}

To produce the graph of Figure 1(b), 26 letter probabilities for the monkey typewriter keys were generated using the classic ``broken stick'' model.    We drew a sample of $K-1=25$ uniform random variables $X_1, X_2, \dots, X_{K-1}$  from $[0,1]$.  Denote their order statistics as $X_{1:K-1}\ge X_{2:K-1}\dots \ge X_{K-1:K-1}$.  The interval was partitioned into $K$ mutually exclusive and exhaustive segments by the
$K$ {\it spacings} $D_i$ defined   as the differences between successive uniform order statistics:   $D_1=1-X_{1:K-1}$,
 $D_i=X_{i-1:K-1} - X_{i:K-1}$ for $2\le i \le K-1$, and $D_{K}=X_{K-1:K-1}$, so that $\sum_{i=1}^K D_i=1$.   Write the order statistics of the spacings themselves   
as  $D_{1:K}\ge D_{2:K}\ge\dots\ge D_{K:K}$. 
After obtaining 
a sample of $25$ uniform random variables from $[0,1]$ the letter probabilities 
 were specified as
$q_i=.82 D_{i:26}$.  The factor .82 reflects the fact that .18 was used for the probability of the 27th typewriter key, the  space character, matching the example of Miller (1957).   These letter probabilities were then used to generate the base values of words, and we then graphed the 475,255 largest base values in Figure 1 (b), corresponding to the 475,255 largest base values graphed in Figure 1(a),  where Miller's equal probability model was used.
We mention that 
Good (1969, p. 577), Bell et al (1990, p. 92) and other researchers have noted  that the distribution of letter frequencies  in English is
well approximated by the expected values of uniform spacings.

Figure 1(c) was generated by the same process except that the random variables $X_1,\dots,X_{25}$ were drawn
from the  beta distribution ${\rm beta}(3,2)$ with pdf $f(x)=x^{3-1}(1-x)^{2-1}/B(3,2)$.   
To understand why $-\beta\approx -1$ in Figures 1(b) and 1(c),  we 
make use of a  strong limit theorem for the logarithms of spacings due to  Shao and Hahn (1995, Corollary 3.6; 
see also Ekstr\"om, 1997, Corollary 4).   {\it Almost sure convergence} is denoted $\xrightarrow{a.s.}$ and  {\it almost everywhere} is abbreviated $a.e.$.

\begin{theorem}
Let $X_1, \dots, X_{K-1}$ be $K-1$ i.i.d.  random variables with on $[0,1]$ with cumulative distribution function $H(x)$.
Define $H^{-1}(y)={\rm inf}\{x: H(x)>y\}$.   Assume $f(x)= (d/dx)H^{-1}(x)$ exists $a.e.$   If 
there is an $\epsilon_0>0$ such that $f(x)\ge \epsilon_0$ $a.e.$, then  
\begin{equation}
{1\ \over K} \sum_{i=1}^{K} \log\Bigl(K D_i\Big) \xrightarrow{a.s.} \int_0^1 \log f(x) dx - \lambda \ \ {\rm as}\ K \to\infty,   
\end{equation} 
where $\lambda=.577\dots$ is Euler's constant.   
\end{theorem}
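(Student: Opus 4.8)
The plan is to reduce the general case to the uniform case via the probability integral transform and then exploit the classical representation of uniform spacings by i.i.d.\ exponentials. Writing $U_j = H(X_j)$, the variables $U_1,\dots,U_{K-1}$ are i.i.d.\ uniform on $[0,1]$, and since $H^{-1}$ is nondecreasing the order statistics satisfy $X_{(i)} = H^{-1}(U_{(i)})$; thus each spacing $D_i$ of the $X_j$ is an increment of $H^{-1}$ across one uniform spacing $S_i = U_{(i)} - U_{(i-1)}$ (with the two boundary spacings treated analogously). First I would record the facts to be used on the uniform scale: if $E_1,\dots,E_K$ are i.i.d.\ exponential$(1)$ with partial sum $\Sigma_K = \sum_{j=1}^K E_j$, then $(S_1,\dots,S_K)$ has the same joint law as $(E_1/\Sigma_K,\dots,E_K/\Sigma_K)$, so that $K S_i \overset{d}{=} K E_i/\Sigma_K$.

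Next I would settle the uniform case outright, since it already produces the constant $-\lambda$. Using the exponential representation,
\begin{equation}
\frac{1}{K}\sum_{i=1}^K \log(K S_i) = \frac{1}{K}\sum_{i=1}^K \log E_i - \log\!\Bigl(\frac{\Sigma_K}{K}\Bigr). \nonumber
\end{equation}
By the strong law of large numbers the first term converges a.s.\ to $\mathbb{E}[\log E_1] = \Gamma'(1) = -\lambda$, while $\Sigma_K/K \to 1$ a.s.\ forces the second term to $0$. Hence the uniform spacings alone give the limit $-\lambda$, matching the claim in the case $f\equiv 1$.

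For general $H$ I would apply the mean value theorem to $H^{-1}$ across each uniform spacing, writing $D_i = f(\xi_i)\,S_i$ for some $\xi_i \in [U_{(i-1)},U_{(i)}]$, so that
\begin{equation}
\frac{1}{K}\sum_{i=1}^K \log(K D_i) = \frac{1}{K}\sum_{i=1}^K \log(K S_i) + \frac{1}{K}\sum_{i=1}^K \log f(\xi_i). \nonumber
\end{equation}
The first sum converges a.s.\ to $-\lambda$ by the uniform case. The crux is to show the second sum converges a.s.\ to $\int_0^1 \log f(x)\,dx$. Heuristically the $\xi_i$ equidistribute on $[0,1]$, so this is a random Riemann/empirical-average approximation of $\int_0^1 \log f$; the hypothesis $f \ge \epsilon_0$ supplies the uniform lower bound $\log f \ge \log \epsilon_0$, which is exactly what keeps the logarithm from diverging to $-\infty$ on the short spacings.

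The main obstacle is this second term, and it is where the assumption $f\ge\epsilon_0$ does its work. Three difficulties must be handled: (i) $f$ is only assumed to exist a.e., so the pointwise mean value theorem used above must be replaced by an argument resting on the absolute continuity of $H^{-1}$ that tolerates the exceptional null set; (ii) $\log f$ is bounded below but may be unbounded above, so convergence of $K^{-1}\sum \log f(\xi_i)$ to $\int_0^1 \log f$ requires a truncation via the splitting $\log f = \log(f\wedge M) + (\log f - \log(f\wedge M))$, with the upper tail controlled by monotone/dominated convergence as $M\to\infty$; and (iii) upgrading convergence from \emph{in probability} to \emph{almost sure} calls for the a.s.\ convergence of the empirical measure of the $U_j$ (Glivenko--Cantelli) together with the a.s.\ vanishing of the maximal spacing. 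I would isolate (ii)--(iii) as the heart of the matter and reduce them to the corresponding statement for bounded, continuous $\log f$ by approximation, invoking the lower bound $\epsilon_0$ to make the approximation errors uniformly controllable.
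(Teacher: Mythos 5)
You should first note that the paper does not prove this statement at all: it is quoted verbatim from Shao and Hahn (1995, Corollary 3.6) and used as a black box, so there is no internal proof to compare against. Your attempt therefore has to stand or fall on its own. As a roadmap it is sensible and identifies the right ingredients: the probability integral transform $X_{(i)}=H^{-1}(U_{(i)})$, the representation of uniform spacings as normalized exponentials, and the identification of the constant via $\mathbb{E}[\log E_1]=\Gamma'(1)=-\lambda$. That correctly explains where the Euler constant comes from, which is the most illuminating part of the statement.

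However, the two steps that constitute the actual content of the theorem are announced rather than carried out. First, the exponential representation $(S_1,\dots,S_K)\overset{d}{=}(E_1/\Sigma_K,\dots,E_K/\Sigma_K)$ is an equality in distribution for each fixed $K$ only; almost sure convergence is a property of the joint law of the whole sequence in $K$, and the nested i.i.d.\ sample is not coupled to a single exponential sequence in the way your SLLN argument requires. To transfer the conclusion you need either complete convergence (summable tail probabilities, hence Borel--Cantelli under any coupling) or an argument carried out directly on the nested sample; invoking the SLLN on the exponential side proves the result only for the exponential-array construction. Second, the convergence $K^{-1}\sum_i \log f(\xi_i)\to\int_0^1\log f$ is the heart of the theorem and is left as a plan. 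The $\xi_i$ are one intermediate point per random spacing, so this is a Riemann-type sum with respect to a random partition, and for an $f$ that is merely measurable (and possibly unbounded above) such sums need not converge to the integral even deterministically; moreover the mean value theorem step $D_i=f(\xi_i)S_i$ presupposes differentiability (indeed absolute continuity of $H^{-1}$), which the hypothesis ``$f$ exists a.e.'' does not supply. You flag all three difficulties honestly, and the truncation-plus-Glivenko--Cantelli strategy is plausible, but until those estimates are actually performed this is an outline of a proof, not a proof. If you want a complete argument you should either work through Shao and Hahn's proof (which handles exactly these points) or replace the intermediate-point step by writing $D_i=\int_{U_{(i-1)}}^{U_{(i)}}f$ and controlling $\log(D_i/S_i)$ by convexity estimates rather than by a pointwise mean value theorem.
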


\begin{remark}
Shao and Hahn call $\int_0^1 \log f(x)dx$ the generalized entropy of $H$.
If $H(x)$ has a density function $(d/dx) H(x)=h(x)$, then $f(x)=(d/dx) H^{-1}(x)=1/h(H^{-1}(x))$ and a change of variable
shows that $\int_0^1 \log  f(x) dx=-\int_0^1 h(x) \log h(x) dx$, which is the differential entropy of $h(x)$.  Note that when a density exists, the bound
$f(x)\ge \epsilon_0>0$ imples that $h(x)\le 1/\epsilon_0<\infty$, i.e., this theorem holds for random variables on $[0,1]$ with densities bounded  
away from infinity.
\end{remark}

\begin{corollary}
With radix-$K$ logarithms, Theorem 2 yields 
\begin{equation}
{ \sum_{i=1}^{K} \log_K D_i \over K}  \xrightarrow{a.s.}   -1 \ {\rm as}\ K \to\infty. \nonumber  
\end{equation}
\end{corollary}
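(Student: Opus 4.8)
The plan is to reduce the statement directly to Theorem~2 by a change of logarithmic base, so that essentially no new probabilistic input is required. First I would record the elementary identity $\log_K D_i = \log D_i / \log K$, valid for each $i$ and each $K \ge 2$, which lets me rewrite the quantity of interest as
\begin{equation}
\frac{1}{K} \sum_{i=1}^{K} \log_K D_i = \frac{1}{\log K} \cdot \frac{1}{K} \sum_{i=1}^{K} \log D_i. \nonumber
\end{equation}

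Next I would connect the natural-log sum on the right to the one appearing in Theorem~2 by expanding $\log(K D_i) = \log K + \log D_i$ and summing over $i$, which gives
\begin{equation}
\frac{1}{K} \sum_{i=1}^{K} \log D_i = \frac{1}{K} \sum_{i=1}^{K} \log(K D_i) - \log K. \nonumber
\end{equation}
Substituting this into the previous display and dividing through by $\log K$ yields the key decomposition
\begin{equation}
\frac{1}{K} \sum_{i=1}^{K} \log_K D_i = \frac{1}{\log K} \left( \frac{1}{K} \sum_{i=1}^{K} \log(K D_i) \right) - 1. \nonumber
\end{equation}

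Finally I would pass to the limit. By Theorem~2 the parenthesized factor converges almost surely to the finite constant $\int_0^1 \log f(x)\,dx - \lambda$. The only substantive point — though it is a mild one — is to verify that multiplying an almost surely convergent sequence with finite limit by $1/\log K \to 0$ yields almost sure convergence to $0$: along any sample path on which the average converges, its values are bounded, so dividing by $\log K \to \infty$ forces the first term to vanish. Hence the right-hand side converges almost surely to $0 - 1 = -1$, which is the assertion. No further estimates are needed, since the generalized-entropy term is absorbed harmlessly by the growing normalization $\log K$; in particular the limiting exponent is independent of the underlying distribution $H$, which is precisely the ``almost universal'' behavior the paper is after.
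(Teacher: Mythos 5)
Your proposal is correct and takes essentially the same route as the paper: both rewrite the average of $\log_K D_i$ by dividing the conclusion of Theorem~2 through by $\log K$, so that the generalized-entropy term $\int_0^1 \log f(x)\,dx - \lambda$ is absorbed by the factor $1/\log K \to 0$ and only the $-1$ survives. If anything, your version is slightly more carefully phrased, since you state the decomposition as an exact identity before passing to the limit rather than writing a $K$-dependent quantity on the right of an almost-sure limit.
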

\begin{proof}
Rewriting the left side of the limit (9)  as $\log K + {\sum_{i=1}^{K} \log D_i  /  K}$ and
dividing both sides by $\log K$ gives
\begin{equation}
{\log K \over \log K} + { \sum_{i=1}^{K} \log D_i \over K\log K}  \xrightarrow{a.s.}  {{\int_0^1 \log f(x) dx}\over \log K} -  {\lambda \over \log K} \ \ {\rm as}
\  K\to\infty. \nonumber
\end{equation} 
The terms on the right of the limit $\to 0$ as $K\to\infty$.
Expressing logarithms using radix-$K$ and subtracting $1=\log K / \log K$ from both sides of the limit  completes the proof.
\end{proof}

Figures 1(b) and 1(c) were generated using spacings from the 
 uniform and beta(3,2) distributions where 
we selected letter probabilities from the interval $[0,c]$
by specifying 
$q_i=c*D_{i:K}$ ($0<c<1$).    Our choice for this example was $c=.82$ corresponding to what Miller used.  
Defining  $\overline{\mu}_K=\sum_{i=1}^K {\log_K q_i }/K$, 
we therefore have 
\begin{equation}
 { \overline{\mu}_K} = {\log_K c} +  {\sum_{i=1}^K \log_K D_{i:K} \over K} \xrightarrow {a.s.} -1,   \nonumber
\end{equation}
since ${\log_K c  =\log c/\log K}$ $\to 0$ as $K\to\infty$.  Consequently, $c\in(0,1)$ is asymptotically negligible for our purposes.  

It now remains to explain how $\overline{\mu}_K\xrightarrow{a.s.}-1$ relates to the log-log slope 
$-\beta={1 / \log_{q_1} R_0}=\log_{R_0} q_1$ obtained from the Shannon-Csisz\'ar-Mandelbrot difference
equation calculation of Section 2.  Since we saw that $-\beta <-1$,   if  we can now show that $\overline{\mu}_K \le -\beta$,  then for sufficiently large $K$, $-\beta$ will be forced close to $-1$.

\begin{proposition}
$\overline{\mu}_K \le -\beta$.
\end{proposition}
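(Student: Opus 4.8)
The plan is to expose the probability vector hidden inside the defining equation (5) for $R_0$. Recall from Section 2.3 that $-\beta = \log_{R_0} q_1$, so that $q_1 = R_0^{-\beta}$, and since $q_i = q_1^{\alpha_i}$ this gives $q_i = R_0^{-\beta\alpha_i}$, equivalently $q_i^{1/\beta} = R_0^{-\alpha_i}$. First I would set $p_i = q_i^{1/\beta} = R_0^{-\alpha_i}$ and observe that, by the very equation (5) that characterizes $R_0$,
\begin{equation}
\sum_{i=1}^K p_i = \sum_{i=1}^K R_0^{-\alpha_i} = 1. \nonumber
\end{equation}
Thus $(p_1,\dots,p_K)$ is a genuine probability vector, and this is the structural fact that makes the inequality tractable.

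Next I would rewrite $\overline{\mu}_K$ in terms of the $p_i$. Since $q_i = p_i^{\beta}$ gives $\log q_i = \beta\log p_i$, we have
\begin{equation}
\overline{\mu}_K = \frac{1}{K}\sum_{i=1}^K \log_K q_i = \frac{1}{K\log K}\sum_{i=1}^K \log q_i = \frac{\beta}{K\log K}\sum_{i=1}^K \log p_i. \nonumber
\end{equation}
Because the prefactor $\beta/(K\log K)$ is strictly positive (recall $\beta>1$ and $K\ge 2$), the entire statement reduces to showing that $\frac{1}{K}\sum_{i=1}^K \log p_i \le -\log K$, since multiplying that bound through by the positive prefactor returns $\overline{\mu}_K \le -\beta$.

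The remaining inequality is exactly the arithmetic--geometric mean inequality, that is, Jensen's inequality for the concave function $\log$, applied to the probability vector $(p_i)$: the arithmetic mean of the $\log p_i$ is at most the logarithm of the arithmetic mean of the $p_i$, namely
\begin{equation}
\frac{1}{K}\sum_{i=1}^K \log p_i \le \log\Bigl(\frac{1}{K}\sum_{i=1}^K p_i\Bigr) = \log\frac{1}{K} = -\log K, \nonumber
\end{equation}
where the middle equality uses $\sum_{i=1}^K p_i = 1$. Substituting this back into the displayed expression for $\overline{\mu}_K$ yields $\overline{\mu}_K \le -\beta$.

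Since each step is elementary once the substitution $p_i = q_i^{1/\beta}$ is in place, I do not anticipate a real obstacle; the only genuine content is the opening observation that these escort values sum to one. Without it, one is tempted to manipulate the $\alpha_i$ and $R_0$ directly, leaving the convexity structure hidden. It is worth noting that equality holds precisely when all the $p_i$ coincide, i.e. when all letter probabilities are equal --- Miller's case --- which is consistent with the earlier identity $-\beta = 1/\log_{q_1} K$ derived there.
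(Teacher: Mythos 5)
Your proof is correct and is essentially the paper's argument in different notation: the paper applies the geometric--harmonic mean inequality to the numbers $R_0^{\alpha_i}$ together with $\sum_i R_0^{-\alpha_i}=1$, which is exactly your Jensen/AM--GM step applied to the reciprocals $p_i=R_0^{-\alpha_i}=q_i^{1/\beta}$, and both reduce to the same inequality $R_0^{(\alpha_1+\dots+\alpha_K)/K}\ge K$. Your repackaging via the probability vector $(p_i)$ is a clean way to see it, but it is not a genuinely different route.
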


\begin{proof}
\begin{eqnarray}
\overline{\mu}_K &=&\sum_{i=1}^K {\log_K q_i\over K}  \nonumber  \\
&=& { {(1 +\alpha_2 + \dots + \alpha_K)}\over K}  \log_K q_1   \nonumber   \\
&=&     {(1+\alpha_2+\dots+\alpha_K)\over K}  {\log_{R_0} q_1 \over \log_{R_0} K}  \nonumber  \\
&=&     {(1+\alpha_2+\dots+\alpha_K)\over K}  {-\beta \over \log_{R_0} K} \nonumber  \\
&=&     {(1+\alpha_2+\dots+\alpha_K)\over K \log_{R_0} K}  (-\beta).  \nonumber
\end{eqnarray}
\end{proof}
It is now clear that $\overline{\mu}_K\le -\beta$ will hold provided
$\Bigl[ {(1+\alpha_2 +\dots +\alpha_K)\over K\log_{R_0} K}\Bigr]\ge 1$, and
this is true because
\begin{eqnarray}
R_{0}^{ \bigl({1+\alpha_2+\dots+\alpha_K}\bigr)/K}&=&\bigl(R_{0}^1 R_{0}^{\alpha_2}\dots R_{0}^{\alpha_K}\bigr)^{1\over K} \nonumber \\
&\ge& K \over R_{0}^{-1} + R_{0}^{-\alpha_2} + \dots +R_{0}^{-\alpha_K}  \nonumber \\
&=&K, \nonumber
\end{eqnarray} 
where we use the geometric-harmonic mean inequality and then the fact that 
$R_{0}^{-1} + R_{0}^{-\alpha_2} + \dots +R_{0}^{-\alpha_K}=1$.
Therefore,  $\Bigl[ {(1+\alpha_2 +\dots +\alpha_K)\over K\log_{R_0} K}\Bigr]\ge 1$, and so 
$\overline{\mu}_K\le -\beta$.   In the special case of Miller's equiprobability model, exact equality holds:
$\overline{\mu}_K=-\beta$. 

We will just remark here that our result can be looked at from a much more general perspective, as will be discussed elsewhere.

\section*{References}

\noindent\hangindent=.7cm
Bell, Timothy  C., Cleary, J.G., and Witten, I. H., 1990.  Text Compression. Prentice-Hall, Englewood Cliffs, N.J.

\noindent\hangindent=.7cm
Bochkarev, V.V. and Lerner, E. Yu., 2012. The Zipf law for random texts with unequal probabilities of occurrence of letters and
the Pascal pyramid.  Russian Math., 56, 25-27.

\noindent\hangindent=.7cm
Brillouin, L., 1956.  Science and Information Theory.  Academic Press, NY, NY.

\noindent\hangindent=.7cm
Conrad, B. and Mitzenmacher, M., 2004.  Power laws for monkeys typing randomly: the case of unequal letter probabilities.  IEEE Trans. Info. Theory, 50(7), 1403-1414. 

\noindent\hangindent=.7cm
Csisz\'ar, I., 1969.  Simple proofs of some theorems on noiseless channels.  Information and Control, 14, 285-298.

\noindent\hangindent=.7cm
Ekstr\"om, M., 1997.  Strong limit theorems for sums of logarithms of $m^{\rm th}$ order spacing estimates.  Research Report No. 8, Dept. of Mathematical Statistics, Umea University.  

\noindent\hangindent=.7cm
Good, I.J., 1969.  Statistics of Language: Introduction. In: A.R. Meethan and R.A. Hudson, editors,   Encyclopedia of Linguistics, Information and Control, Pergamon Press, Oxford, England, pp. 567-581. 

\noindent \hangindent=.7cm
Mandelbrot, B., 1953.  Jeux de communication., Publ. Inst. Stat., Univ. Paris, 2, 1-124.

\noindent\hangindent=.7cm
Mandelbrot, B., 1954a. On recurrent noise limiting coding. In:  Proceedings of the Symposium on Information Networks, Microwave Research Institute, Polytechnic Institute of Brooklyn, 205-221. 

\noindent\hangindent=.7cm
Mandelbrot, B., 1954b. Simple games of strategy occurring in communication through natural languages.  Trans. I.R.E., PGIT-3, 124-137.

\noindent\hangindent=.7cm
Miller, G.A., 1957. Some effects of intermittent silence.  Amer. J. Psych., 70(2), 311-314. 

\noindent\hangindent=.7cm
Miller, G.A. and Chomsky, N.,  1963. Finitary Models of Language Users.  In:  R.D. Luce, R.R. Bush and E. Galanter, editors,  Handbook of Mathematical Psychology, vol. 2,
New York: Wiley, pp. 419-491. 

\noindent\hangindent=.7cm
Mitzenmacher, M., 2004. A brief history of generative models for power law and lognormal distributions. Internet Math., 1(2), 226-251.

\noindent\hangindent=.7cm
Perline, R., 1996.  Zipf's law, the central limit theorem and the random division of the unit interval.  Phy. Rev. E., 54(1), 220-223.

\noindent\hangindent=.7cm
Shannon, C. E., 1948. A mathematical theory of communication. Bell System Tech. J., 27(3), 379-423.

\noindent\hangindent=.7cm
Shao, Y. and Hahn, M., 1995. Limit theorems for the logarithms of sample spacings. Probability and Statistics Letters, 24, 121-132.

\noindent\hangindent=.7cm
Zipf, G.K., 1935.  The Psycho-Biology of Language.  Houghton Mifflin, Boston, MA.  

\noindent\hangindent=.7cm
Zipf, G.K., 1949.  Human Behavior and the Principle of Least Effort.  Addison-Wesley, Cambridge, MA.

\vskip .75in

\noindent
{\it Acknowledgments:}  My thanks to PJ Mills, Chris Monroe and Ron Perline for their helpful comments on various parts of this manuscript.  I also
want to express my appreciation to Carol Hutchins and other staff members of the Courant Institute of Mathematical Sciences Library of New York University for their
assistance.

\end{document}